\theoremstyle{plain}
\newtheorem{prop}{Proposition}
\newtheorem{thm}{Theorem}
\newtheorem{lemma}{Lemma}
\newtheorem*{rmk}{Remark}
\def\C{\mathbb{C}}
\def\Z{\mathbb{Z}}
\def\I{\mathcal{I}}
\def\K{\mathcal{K}}
\def\O{\mathcal{O}}
\def\span{\operatorname{Span}}
\def\ord{\operatorname{ord}}
\def\Im{\operatorname{Im}}
\newcommand{\tup}[1]{\langle{#1}\rangle}
\title{A Kazhdan-Lusztig Atlas on $G/P$}
\author{Daoji Huang}
\date{}
\begin{document}
\maketitle



\begin{abstract}
A stratified variety has a Kazhdan-Lusztig atlas if it can
be locally modelled with Kazhdan-Lusztig varieties 
stratified by Schubert varieties in some
Kac-Moody flag manifold via stratified isomorphisms.
In this paper, we show that the partial flag manifold
$G/P$ with the projected Richardson stratification has
a Kazhdan-Lusztig atlas, with each chart stratified-isomorphic to a Kazhdan-Lusztig variety in the
affine flag manifold of the formal loop group $\widehat{G}$ of $G$. This result generalizes
that of Snider's on $Gr(k,n)$ with the positroid stratification, and is a geometric counterpart of the
combinatorial correspondence between the poset of
projected Richardson stratification and a certain convex set
in the Bruhat order of the Weyl group of $\widehat{G}$ given by He and Lam.

\end{abstract}
\section{Introduction and Statements of the Main Result}

\subsection{Projected Richardson stratification on $G/P$}
Let $G$ be a complex, connected, and simply connected
reductive group over $\mathbb{C}$.
Let $B$ and $B_-$ be the upper and lower
Borel subgroups 
containing the maximal torus $T$,
and $U$ (resp. $U_-$) denote the unipotent radical of 
$B$ (resp. $B_-$). 
Let $X_*(T)$ denote the coweight lattice of $T$. 
Let $W=N(T)/T$ denote the Weyl group of $G$, and
$w_0\in W$ its longest element. 
Let $S$ denote the set of simple roots determined
by $(T,B)$. 
Let $\Phi$ denote the set of all roots of $G$,  and $\Phi_+$ (resp. $\Phi_-$)  the set of positive (resp. negative)
roots. 
For any $\alpha\in \Phi$, let $s_\alpha$ denote the corresponding 
reflection. When $\alpha\in S$,
$s_\alpha$ is a simple reflection.

The flag variety $G/B$ has a
stratification by Schubert
cells $G/B=\bigsqcup_{w\in W} X_w^\circ$
where $X_w^\circ = B_-wB/B$ and
 by opposite Schubert cells $G/B=\bigsqcup_ {w\in W} X^w_\circ$ where $X^w_\circ = BwB/B$. Let $X_w=\overline{B_-wB/B}$ denote the Schubert variety and 
 $X^w=BwB/B$ denote the opposite Schubert variety. The intersections of the Schubert and
 opposite Schubert cells are called  open Richardson varieties, denoted by
 $\mathring{X}^w_v=X^w_\circ\cap X_v^\circ$. These also form a stratification
 of $G/B$, and this stratification is anticanonical.
 The closure of $\mathring{X}^w_v$ is the Richardson variety $X^w_v=X^w\cap X_v$.

We fix a subset $J$ of the simple roots $S$. Let $P$ denote the
parabolic subgroup corresponding to $J$, and
$P_-$ the opposite parabolic subgroup of $P$. Let $W_P$
denote the subgroup of $W$ generated by
$\{s_\alpha:\alpha\in J\}$, and $W^P$ the set of minimal coset 
representatives of $W/W_P$. 
Let $w_{0,P}$ denote the longest element in $W_P$. 
Let $U^P$ (resp. $U^P_-$)
denote the unipotent radical of $P$ (resp. $P_-$). 
$P$ admits a Levi decomposition $P=L\ltimes U^P$
where $L\supset T$. 

Let $\pi$ denote the projection $G/B\to G/P$. 
Denote by 
$\mathring{\Pi}^w_v$ the open projected Richardson
variety $\pi(X^w_\circ\cap X_v^\circ)$ and $\Pi^w_v$ the projected Richardson variety $\pi(X^w\cap X_v)$.
$G/P$ admits a projected Richardson stratification
\[G/P=\bigsqcup_{(w,v)\in Q_P}\mathring{\Pi}_v^w,\]
where $Q_P=\{(w,v):w\in W^P, v\in W, v\le w\}.$
This is an anticanonical stratification of $G/P$.
Define an ordering $\preceq$ on $W^P\times W$
$(w',v')\preceq (w,v)$ if and only if there exists $u\in W_P$ such that 
$w'u\le w$ and $v'u\ge v$. Then $(Q_P,\preceq)$ is a subposet of $(W^P\times W, \preceq)$, and for any $(w',v'), (w,v)\in Q_P$, $(w',v')\preceq (w,v)$
if and only if $\Pi^{w'}_{v'}\subseteq \Pi^w_v$.  For more details
on projected Richardsons, see
\cite{KLS}.

\subsection{Formal loop group, affine root system, and affine flag variety}
Let $\mathcal{K}$ denote the local field of Laurent
series $\C((t^{-1}))$, and 
$\mathcal{O}=\C[[t^{-1}]]$ its ring of formal power series. Also let
$\mathcal{O}_-$ be the polynomial ring $\C[t]$.
 The formal loop group 
 $G(\mathcal{K})$ 
 (also written as $\widehat{G}$) has an extended torus
$T\times \mathbb{C}^*$ where $\mathbb{C}^*$ acts via loop rotation by scaling
$t$. 
A character on $T$ can be identified with a character
on $T\times \mathbb{C}^*$ that is trivial on $\mathbb{C}^*$. Furthermore, let
$\delta$ denote the character on $T\times \mathbb{C}^*$ that is trivial on
$T$ and identity on $\mathbb{C}^*$. 
The affine root system of $G(\mathcal{K})$ is given by
\[\Phi_\text{aff}=\{\alpha + m\delta: \alpha\in \Phi\sqcup\{0\}, m\in \mathbb{Z}\}\setminus\{0\}.\] 
and the simple roots
$S_\text{aff}=S\sqcup\{\alpha_0-\delta\}$
where $\alpha_0\in\Phi$ is the lowest root. 
$G(\mathcal{O})$ is a maximal parabolic subgroup of $G(\mathcal{K})$. Its unipotent subgroup $\mathcal{U}^{G(\mathcal{O})}$ corresponds to the
set of roots $\alpha+m\delta$ where $m<0$. Similarly, 
$\mathcal{U}^{G(\mathcal{O}_-)}$ corresponds to the set of roots
$\alpha+m\delta$ where $m>0$. 

The affine Weyl group 
$\widehat{W}=W\ltimes X_*(T)$ acts on $\Phi_\text{aff}$, where $W$ acts on
$\delta$ trivially and $\alpha\in \Phi$ the usual way. Also for any $t^\mu\in X_*(T)$, \[t^\mu\cdot (\alpha +m\delta)=\alpha + (m+\langle \mu, \alpha\rangle )\delta.\] 
Let 
$\mathcal{I}=\{f\in G(\mathcal{O}):f(\infty)\in B\subset G\}$ be the standard
Iwahori subgroup 
and
$\mathcal{I}_-=\{f\in G(\mathcal{O}_-):f(0)\in B_-\subset G\}$
the opposite Iwahori subgroup of $\mathcal{I}$.
The affine flag variety
is $G(\mathcal{K})/\mathcal{I}$.
The Schubert and opposite
Schubert varieties
are defined as
$\mathcal{X}_w = \overline{\mathcal{I}_-w\mathcal{I}/\mathcal{I}}$
and $\mathcal{X}^w=\overline{\mathcal{I}w\mathcal{I}/\mathcal{I}}$
where $w\in 
\widehat{W}$.

\subsection{Kazhdan-Lusztig atlas} 
Let $(H,  B^\pm_H, T_H, W_H)$ be a pinning of a 
Kac-Moody Lie group. A \textbf{Kazhdan-Lusztig variety} in the Kac-Moody
flag variety $H/B_H$ is the intersection of
an opposite Schubert cell
$X^w_\circ =B_H wB_H/B_H$ and a Schubert variety $X_v=\overline{B_HvB_H/B_H}$, where
$w,v\in W_H$.  
Let $M$ be a smooth variety bearing a stratification $\mathcal{Y}$, 
whose minimal
strata $\mathcal{Y}_{\min}$ are points. To be precise,
$M=\bigsqcup_{y\in \mathcal{Y}}M_y^\circ$ where
$\mathcal{Y}$ is a ranked poset,
each
$M_y^\circ$ is a locally closed subvariety
with closure $M_y$,
and  $M_y=\bigsqcup_{y'\le y} M_{y'}^\circ$.
We say that $(M,\mathcal{Y})$ has
a \textbf{Kazhdan-Lusztig atlas} 
with the modelling Kac-Moody
$H/B_H$ if
\begin{enumerate}[(a)]
\item there is a ranked poset injection
$v:\mathcal{Y}^{\text{opp}}\to W_H$ whose image is
$\bigcup_{f\in\mathcal{Y}_{\min}} [v(M), v(f)]$, 
\item $M$ can be covered by charts $U_f$ centered around the minimal strata $f\in \mathcal{Y}_{\min}$ such that for each $f$,
there is a stratification-preserving isomorphism
\[c_f:U_f\to X^{v(f)}_\circ\cap X_{v(M)}\subset H/B_H.\]
\end{enumerate}

\subsection{Main result} Our goal is to show that 
 $G/P$ equipped
with the projected Richardson stratification has a Kazhdan-Lusztig
atlas with the modelling flag
variety $G(\mathcal{K})/\mathcal{I}$.
The combinatorial part of the result concerning the poset structures has been
established by He and Lam \cite{HL}. Our result
is a generalization of Snider's
result on the Grassmannian 
variety $Gr(k,n)$ of $k$-planes in $n$-space
with the positroid stratification \cite{Snider}. 
In that case, $Gr(k,n)$ admits a \textbf{Bruhat atlas} \cite{HKL}, where each chart is modelled by
an opposite Schubert cell 
(i.e., $v(M)=1$) of the 
corresponding type A
affine
flag variety.
\begin{thm}
$G/P$ with the projected Richardson stratification has a Kazhdan-Lusztig atlas
\[w_1U_-^P P/P\cong \mathcal{X}^{w_1t^{\lambda}w_{0,P}w_0w_2}_\circ\cap \mathcal{X}_{t^\lambda w_{0,P}w_0}\] 
where $w_1\in W^P$, $w_2w_1=w_0w_{0,P}$, and
$\lambda$ is a dominant coweight such that
 for any $\alpha\in S$,
$\langle \alpha, \lambda\rangle = 0$
iff $\alpha\in J$. 

Under the isomorphism, the strata correspond
as follows
\[w_1U_-^P P/P \cap \Pi^w_v \cong \mathcal{X}_\circ^{w_1t^\lambda w_{0,P}w_0w_2}\cap \mathcal{X}_{vt^\lambda w^{-1}}\ (\text{for all }v\in W, w\in W^{P},v\le w)\]
\end{thm}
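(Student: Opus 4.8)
The plan is to construct the isomorphism chart by chart, using the loop group realization of Kazhdan–Lusztig varieties in the affine flag variety $G(\mathcal{K})/\mathcal{I}$. First I would recall the standard ``big cell'' description: the open Richardson $\mathring{X}^w_v\subset G/B$ pulls back to an explicit affine variety by intersecting $X^w_\circ = BwB/B$ with $X_v = \overline{B_- v B/B}$, and the projection $\pi\colon G/B\to G/P$ restricted to $w_1 U_-^P P/P$ (with $w_1\in W^P$) is an isomorphism onto its image. The key geometric input is a ``lifting'' isomorphism identifying the neighborhood $w_1 U_-^P P/P$ of the point $w_1 P/P$ in $G/P$ with an affine chart in $G(\mathcal{K})/\mathcal{I}$ around the $T$-fixed point indexed by $w_1 t^\lambda w_{0,P} w_0 w_2$. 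This is where the coweight $\lambda$ enters: because $\langle\alpha,\lambda\rangle = 0$ exactly for $\alpha\in J$, translation by $t^\lambda$ ``unfolds'' the parabolic directions of $P$ into the affine (loop-rotation) directions, so that $U_-^P$ inside $G$ gets matched with a product of affine root subgroups governing the opposite Schubert cell $\mathcal{X}^{w_1 t^\lambda w_{0,P} w_0 w_2}_\circ$. I would make this precise by computing, via the $\widehat W$-action on $\Phi_{\mathrm{aff}}$ given in the excerpt, exactly which affine roots are sent to which, verifying that the relevant root subgroups match up as varieties.

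Second, I would verify the stratification correspondence. On the $G/P$ side, the strata of $w_1 U_-^P P/P$ are the intersections with the projected Richardsons $\Pi^w_v$; pulling these back under $\pi$ (which is why the condition $w_2 w_1 = w_0 w_{0,P}$ and the specific choice of chart matter) and intersecting with the big cell gives open Richardson varieties in $G/B$. On the affine side, the strata of the Kazhdan–Lusztig variety $\mathcal{X}^{w_1 t^\lambda w_{0,P} w_0 w_2}_\circ\cap\mathcal{X}_{t^\lambda w_{0,P} w_0}$ are affine open Richardsons $\mathring{\mathcal{X}}^{w_1 t^\lambda w_{0,P} w_0 w_2}_{v t^\lambda w^{-1}}$. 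To match these I would invoke the combinatorial result of He–Lam \cite{HL}: the poset $(Q_P,\preceq)$ embeds as the interval $[t^\lambda w_{0,P} w_0,\ w_1 t^\lambda w_{0,P} w_0 w_2]$ (or rather the union of such intervals over the chart-indexing minimal strata) in the Bruhat order of $\widehat W$, with $(w,v)\mapsto v t^\lambda w^{-1}$. I would check that this map is exactly the ``$v$'' of the atlas axiom (a), that its image is the prescribed union of intervals, and that it is rank-preserving — the rank on $Q_P$ being codimension of $\Pi^w_v$, matched with the length function on $\widehat W$ shifted appropriately.

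The main obstacle I anticipate is making the chart isomorphism genuinely \emph{stratification-preserving}, i.e.\ checking that the scheme-theoretic identification of $w_1 U_-^P P/P$ with the affine chart in $G(\mathcal{K})/\mathcal{I}$ carries each $\Pi^w_v\cap w_1 U_-^P P/P$ precisely onto $\mathcal{X}_{v t^\lambda w^{-1}}$ intersected with the chart, not just up to closure or set-theoretically. This requires controlling how the projection $\pi$ interacts with Schubert closures — specifically that $\pi^{-1}(\Pi^w_v)\cap (w_1 U_-^P P/P \text{ lifted})$ equals a single open Richardson in $G/B$ rather than a union, which uses that $w_1 U_-^P B/B$ meets each $\pi$-fiber in one point, and that Richardson varieties behave well under the identification. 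I would organize the argument so that the loop-group root-subgroup bookkeeping is done once (establishing the chart isomorphism as an isomorphism of varieties with a torus action), and then the strata correspondence follows by matching $T$-fixed points and taking closures of torus orbits, reducing the delicate closure statements to the already-known combinatorics of \cite{HL} together with the fact that both stratifications are the orbit stratifications for the respective group actions.
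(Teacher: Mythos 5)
Your overall plan matches the paper's at the level of architecture: you work chart by chart, you recognize that the coweight $\lambda$ is there to conjugate the parabolic unipotent radicals into the loop directions (this is exactly Lemma~4 of the paper, $t^\lambda\cdot U^P\subset \mathcal{U}^{G(\mathcal{O}_-)}$ and $t^\lambda\cdot U^P_-\subset\mathcal{U}^{G(\mathcal{O})}$), and you correctly identify the poset map $(w,v)\mapsto vt^\lambda w^{-1}$ and the role of He--Lam for axiom~(a). The paper realizes the chart isomorphism very concretely through Bott--Samelson parametrizations $\psi_{w_1}(\mathbf{z})=m_1(\mathbf{z}_1)\,t^\lambda w_{0,P}w_0\,m_2(\mathbf{z}_2)\mathcal{I}/\mathcal{I}$ on the affine side and a factorization $u_-(\mathbf{z})p(\mathbf{z})$ on the $G/P$ side, supported by Lemmas~1--3 which identify $\Im(m_{Q_1})w_1^{-1}=w_1\cdot U_-^P\cap U$ and $w_2^{-1}\Im(m_{Q_2})=w_1\cdot U_-^P\cap U_-$; your ``root-subgroup bookkeeping'' is a less explicit version of this and is plausible, though you would still need to cut the infinite-dimensional opposite cell $\mathcal{X}^{w_1t^\lambda w_{0,P}w_0 w_2}_\circ$ down to the finite-dimensional piece lying inside $\mathcal{X}_{t^\lambda w_{0,P}w_0}$, which the Bott--Samelson description handles automatically.

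The genuine gap is in your strategy for stratification-preservation. You propose to ``match $T$-fixed points and take closures of torus orbits,'' appealing to ``the fact that both stratifications are the orbit stratifications for the respective group actions.'' This is not true and the method would not get off the ground. The projected Richardson stratification of $G/P$ is not an orbit stratification of any group, and neither are the strata $\mathcal{X}^{w_1t^\lambda w_{0,P}w_0 w_2}_\circ\cap\mathcal{X}_{vt^\lambda w^{-1}}$ of a Kazhdan--Lusztig variety: both are intersections of two \emph{opposite} orbit stratifications, and the closure of one such stratum is a union of smaller ones, not an orbit closure. Moreover, the $T$-fixed-point matching is degenerate: the opposite Schubert cell $\mathcal{X}^u_\circ$ contains a single $T$-fixed point $u$, so every nonempty stratum $\mathcal{X}^u_\circ\cap\mathcal{X}_{v'}$ within the chart contains the same unique fixed point, and comparing fixed points distinguishes nothing. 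What is actually needed — and what the paper supplies in the proof of Proposition~1 — is a direct group-theoretic containment: starting from a point of $\mathring{\Pi}^w_v$, one factors a representative through $P=L\ltimes U^P$, uses Lemma~4 to push the $U^P$- and $U^P_-$-pieces past $t^\lambda$ into $\mathcal{I}$ and $\mathcal{I}_-$ respectively, uses that $L$ centralizes $t^\lambda$, and uses Lemma~1 ($w\cdot\Phi_J\subset\Phi_+$, hence $w(L\cap B)w^{-1}\subset B$) to absorb the Levi part, concluding $B_-vB\,t^\lambda\,Bw^{-1}B\subset\mathcal{I}_-\,vt^\lambda w^{-1}\,\mathcal{I}$. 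That containment, not a torus-fixed-point argument, is the engine that makes $\psi_{w_1}\circ\phi_{w_1}^{-1}$ carry $\mathring{\Pi}^w_v\cap w_1U_-^PP/P$ into $\mathcal{X}_{vt^\lambda w^{-1}}$, after which He--Lam's poset isomorphism closes the loop.
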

\begin{rmk}
\normalfont This result has also been established
independently
recently by Galashin, Lam, and Karp \cite{GKL} through
a slightly different map.
\end{rmk}
\section{Chart Isomorphisms and Proofs}
To describe the chart isomorphisms, we
use Bott-Samelson maps.
For a detailed description of
Bott-Samelson variety in full
Kac-Moody generality, see
\cite{kumar2012kac}. The cases relevant to us
are finite or affine type.
For any $w\in W$ (resp. $\widehat{W}$)
and
$Q=(s_{\alpha_1}, \cdots, s_{\alpha_{l(w)}})$ a reduced word for $w$, we
have a Bott-Samelson map $m_Q:\mathbb{A}^{l(w)}\to G$ (resp. $\widehat{G}$) such that $m_Q(z_1,\cdots,z_{l(w)})= \prod_{i=1}^{l(w)}   \exp(z_iR_{\alpha_i})\widetilde{s_{\alpha_i}}$ where $R_{\alpha_i}$ is a 
root vector that spans the root space indexed by
$\alpha_i$. First we establish some lemmas.
(The dot notation ``$\cdot$'' in this paper denotes the
conjugation action on groups and the induced action Lie algebras. 
It takes higher precedence than
binary set operations.)

\begin{lemma}
For any $w\in W^P$, $w\cdot \Phi_{J}\subset \Phi_+$.
\end{lemma}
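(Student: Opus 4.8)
The plan is to reduce the inclusion to the case of simple roots, where it is essentially the defining property of $W^P$. Recall that an element $w$ lies in $W^P$ exactly when it is the minimal-length representative of the coset $wW_P$, which is equivalent to $\ell(ws_\alpha)=\ell(w)+1$ for every $\alpha\in J$, and hence, by the standard correspondence between length and inversion sets, equivalent to $w\cdot\alpha\in\Phi_+$ for every simple root $\alpha\in J$. So the first step is simply to invoke this characterization; it carries all the real content.

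For the second step, write $\Phi_J$ for the set of positive roots lying in the $\Z$-span of $J$ (the positive roots of the Levi $L$). Given $\beta\in\Phi_J$, expand $\beta=\sum_{\alpha\in J}c_\alpha\,\alpha$ with all $c_\alpha\in\Z_{\ge 0}$. Then
\[ w\cdot\beta \;=\; \sum_{\alpha\in J} c_\alpha\,(w\cdot\alpha), \]
a nonnegative integer combination of the roots $w\cdot\alpha$, each of which is positive by the first step. Expanding each $w\cdot\alpha$ in the basis $S$ of simple roots of $\Phi$, every coordinate of $w\cdot\beta$ in that basis is therefore $\ge 0$. On the other hand $w\cdot\beta$ is itself a root, since $w$ permutes $\Phi$, and in particular it is nonzero. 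A nonzero root all of whose simple-root coordinates are nonnegative lies in $\Phi_+$, so $w\cdot\beta\in\Phi_+$, which is the claim.

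I do not expect any genuine obstacle here. The only input with real content is the characterization $w\in W^P\Leftrightarrow w\cdot\alpha\in\Phi_+$ for all $\alpha\in J$, which is entirely standard, and the remainder is the elementary observation that a nonnegative combination of positive roots, when it is again a root, is positive. One could equally well argue by induction on the height of $\beta$ within $\Phi_J$, peeling off a simple root of $J$ at each step and adding two positive roots, but this comes down to the same computation, so I would present the two-line argument above.
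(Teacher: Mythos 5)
Your proof is correct, and it takes a genuinely different route from the paper's. You use the standard characterization $w\in W^P \Leftrightarrow w\cdot\alpha\in\Phi_+$ for all $\alpha\in J$ (equivalently $\ell(ws_\alpha)>\ell(w)$), and then push forward to all of $\Phi_J$ by linearity of the Weyl action: a nonzero root expressed as a nonnegative combination of positive roots must be positive. The paper instead runs a counting argument on inversion sets: it writes $\ell(w)$ and $\ell(ww_{0,P})$ in terms of $|w\cdot(\Phi_-\setminus\Phi_{-J})\cap\Phi_+|$, $|w\cdot\Phi_{-J}\cap\Phi_+|$, and $|w\cdot\Phi_J\cap\Phi_+|$, uses $w_{0,P}\cdot\Phi_- = (\Phi_-\setminus\Phi_{-J})\sqcup\Phi_J$, and from the length-additivity $\ell(ww_{0,P}) = \ell(w)+\ell(w_{0,P})$ deduces that $|w\cdot\Phi_{-J}\cap\Phi_+|=0$. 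Your argument is shorter and more elementary, relying only on the simple-root criterion for minimal coset representatives plus the fact that roots have coordinates of a single sign in the simple basis; the paper's version has the side benefit of making the inversion-set bookkeeping explicit, which it then reuses directly in the proof of Lemma 2 (where similar decompositions of $w\cdot\Phi_-$ appear). Both are valid; yours is the textbook argument, the paper's is tailored to the surrounding lemmas.
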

\begin{proof}
We have $l(w)=|w\cdot(\Phi_-\setminus \Phi_{-J}\sqcup \Phi_{-J})\cap \Phi_+|=|w\cdot(\Phi_-\setminus \Phi_{-J})\cap \Phi_+)|+|w\cdot \Phi_{-J}\cap \Phi_+|$
and
$w_{0,P}\cdot \Phi_-=(\Phi_-\setminus \Phi_{-J})\sqcup \Phi_J$.
Then $l(ww_{0,P})=|ww_{0,P}\cdot \Phi_-\cap \Phi_+|=|(w\cdot(\Phi_-\setminus \Phi_{-J})\cap \Phi_+)\sqcup (w\cdot \Phi_J\cap \Phi_+)|=|w\cdot(\Phi_-\setminus \Phi_{-J})\cap \Phi_+|+| w\cdot \Phi_J\cap \Phi_+|$.
Since $l(ww_{0,P})=l(w)+l(w_{0,P})$, we must have
$|w\cdot \Phi_{-J}\cap \Phi_+|+|\Phi_J|=|w\cdot \Phi_{J}\cap \Phi_+|$.
Therefore, $|w\cdot \Phi_{-J}\cap \Phi_+|=0$, $|w\cdot \Phi_{J}\cap \Phi_+|=|\Phi_J|$,
and the claim follows.
\end{proof}

\begin{lemma} Let $Q$ be a reduced word for $w$ where $w\in W^P$, then
\[
    \Im(m_Q) w^{-1} = w\cdot U^P_-\cap U.
\]
\end{lemma}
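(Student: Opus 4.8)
The plan is to identify both sides as the image of the same Bott–Samelson-type parametrization and then check the two descriptions agree on tangent spaces (roots), so that the resulting bijective morphism of affine spaces is an isomorphism. First I would recall that for $w \in W^P$ and a reduced word $Q = (s_{\alpha_1}, \dots, s_{\alpha_{\ell(w)}})$, the map $m_Q(z_1, \dots, z_{\ell}) = \prod_i \exp(z_i R_{\alpha_i})\widetilde{s_{\alpha_i}}$ sends $\mathbb{A}^\ell$ onto the "big cell" representatives of $X_\circ^w = U w B / B$, i.e. $\Im(m_Q) \cdot w^{-1}$ lies in $U$ and is precisely $\prod_{i} \exp(z_i \, (s_{\alpha_1}\cdots s_{\alpha_{i-1}})\cdot R_{\alpha_i})$, whose exponents run over the inversion set $\Inv(w^{-1}) = w\cdot \Phi_- \cap \Phi_+$. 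This is the standard fact that $m_Q$ induces an isomorphism $\mathbb{A}^\ell \xrightarrow{\sim} U w B / B \cap B w B / B$; in group terms $\Im(m_Q)\, w^{-1} = U \cap w \cdot U_-$, the subgroup of $U$ with root set $w\cdot\Phi_- \cap \Phi_+$.

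Next I would compute the root set on the right-hand side, $w \cdot U^P_- \cap U$. Since $U^P_-$ has root set $\Phi_- \setminus \Phi_{-J}$, the group $w \cdot U^P_-$ has root set $w\cdot(\Phi_-\setminus\Phi_{-J})$, and intersecting with $U$ picks out those roots landing in $\Phi_+$: the root set is $w\cdot(\Phi_-\setminus\Phi_{-J}) \cap \Phi_+$. The key point is that this equals $w\cdot\Phi_-\cap\Phi_+$, which is exactly the inversion set governing $\Im(m_Q)\,w^{-1}$. This follows from Lemma 1 (or rather its proof): we have $w\cdot\Phi_- = w\cdot(\Phi_-\setminus\Phi_{-J}) \sqcup w\cdot\Phi_{-J}$, and the computation in Lemma 1 shows $w\cdot\Phi_{-J} \cap \Phi_+ = \varnothing$, i.e. $w\cdot\Phi_{-J} \subset \Phi_-$. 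Hence $w\cdot\Phi_-\cap\Phi_+ = (w\cdot(\Phi_-\setminus\Phi_{-J})\cap\Phi_+) \sqcup (w\cdot\Phi_{-J}\cap\Phi_+) = w\cdot(\Phi_-\setminus\Phi_{-J})\cap\Phi_+$, as desired. So both $\Im(m_Q)\,w^{-1}$ and $w\cdot U^P_-\cap U$ are the closed subgroup of $U$ with the same root set.

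To conclude I would argue that a closed connected unipotent subgroup of $U$ is determined by its set of roots (it is directly spanned, in any order, by the corresponding root subgroups), so the two subgroups coincide as subvarieties of $U$. The only genuine subtlety is the directed-spanning / ordering issue: $\Im(m_Q)\,w^{-1}$ is visibly a product of root subgroups taken in the specific order dictated by $Q$, whereas $w\cdot U^P_-\cap U$ is an honest subgroup. But since the relevant root set is closed (being an intersection of $w$-conjugate of a parabolic's unipotent radical root set with $\Phi_+$, both closed), the product of root subgroups in the $Q$-order is already a subgroup and equals the product in any order; this is the standard structure theory of unipotent groups in a reductive group. I expect this last bookkeeping — verifying that $\Im(m_Q)$ actually sweeps out the full subgroup rather than a subvariety, and that the equality is as schemes — to be the main (though routine) obstacle, and I would handle it by invoking the identification of $\Im(m_Q)$ with the cell $U w B/B \cap BwB/B$ transported to $U$ via $\cdot w^{-1}$.
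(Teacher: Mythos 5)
Your proof is correct, but it takes a genuinely different route from the paper. The paper first reduces the problem to $\Im(m_Q)w^{-1} \subset w\cdot U_-\cap U$ by showing via root combinatorics that $w\cdot U_-^P\cap U = w\cdot U_-\cap U$, and then establishes that inclusion by induction on $\ell(w)$ using a torus-limit criterion ($\lim_{t\to\infty}\widecheck\rho(t)\cdot x=1$ iff $x\in U_-$), concluding by a dimension count. You instead unfold $m_Q(\mathbf{z})g_\ell^{-1}$ explicitly as the ordered product $\prod_i \exp\bigl(z_i\,(s_{\alpha_1}\cdots s_{\alpha_{i-1}})\cdot R_{\alpha_i}\bigr)$, observe that the exponents range over the inversion set $w\cdot\Phi_-\cap\Phi_+$, and then invoke the directly-spanned structure of unipotent groups: since $w\cdot\Phi_-\cap\Phi_+$ is a closed subset of $\Phi_+$, the product of these root subgroups in the $Q$-order is already the subgroup $U\cap w\cdot U_-$. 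The root-set matching $w\cdot(\Phi_-\setminus\Phi_{-J})\cap\Phi_+ = w\cdot\Phi_-\cap\Phi_+$ via Lemma 1 is the same in both arguments. The trade-off is that your route is shorter and avoids the induction and limit argument, at the cost of importing the standard ``product in any order is the subgroup'' fact for closed root subsets; the paper's route is more self-contained, proving the needed inclusion directly. One minor point of care that you correctly flag: the equality $\Im(m_Q)w^{-1}=U\cap w\cdot U_-$ depends on taking the specific lift of $w^{-1}$ coming from the reduced word (i.e.\ $\widetilde{s}_{\alpha_\ell}^{-1}\cdots\widetilde{s}_{\alpha_1}^{-1}$), since a different lift would shift the image off of $U$ by a torus element.
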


\begin{proof} 
Since $U_-^P=w_{0,P}\cdot U_- \cap U_-$, we have $w\cdot U^P_-\cap U = ww_{0,P}\cdot U_- \cap w\cdot U_- \cap U$. Let $J$ be the set of simple
roots that generate $P$, $\Phi_J$ be the
set of positive roots generated by
$J$, and $\Phi_{-J}$ be the set of negative roots generated by $-J$. Since $w\in W^P$, we have $w\cdot \Phi_{-J}\subset \Phi_-$ by Lemma 1.  
Therefore, $w\cdot \Phi_-\cap \Phi_+ =w\cdot(\Phi_{-J}\sqcup (\Phi_-\setminus \Phi_{-J}))\cap\Phi_+=w\cdot (\Phi_-\setminus \Phi_{-J})\cap\Phi_+$.
Moreover, $w_{0,P}\cdot \Phi_-=\Phi_J\sqcup (\Phi_-\setminus \Phi_{-J})$, so $ww_{0,P}\cdot\Phi_-\cap \Phi_+=w\cdot (\Phi_J\sqcup (\Phi_-\setminus \Phi_{-J}))\cap \Phi_+\supset w\cdot \Phi_-\cap \Phi_+$, which means
$w\cdot U_-\cap U\subset ww_{0,P}\cdot U_-\cap U$.
Therefore $w\cdot U_-^P\cap U = w\cdot U_-\cap U$,
which has dimension $l(w)$. The dimensions of both sides match up, so it suffices to show that for any
$\mathbf{z}$, $m_Q(\mathbf{z})w^{-1}\in w\cdot U_-\cap U$.

We proceed by induction on the length of $Q$. 
Let $\widecheck{\rho}$ be the sum of
fundamental coweights and we
use the fact that for any $x\in G$, $\lim_{t\to \infty}\widecheck{\rho}(t)\cdot x = 1$ iff $x \in U_-$. When $w=s_\alpha$, we need $\exp(zR_\alpha)\in s_\alpha\cdot U_-$ which is given by
$\lim_{t\to\infty} (s_\alpha\cdot \widecheck{\rho}(t))\cdot \exp(zR_\alpha)
=1$ since $s_\alpha\exp(zR_\alpha)s_\alpha\in U_{-\alpha}$.  
Clearly $\exp(zR_\alpha)\in U$, so the statement is
true when $w$ is a simple reflection.

Now suppose $s_\alpha w>w$. 
Notice that $s_\alpha$ preserves the set  $\Phi_-\setminus\{-\alpha\}$.
It follows that 
$s_\alpha\cdot(w\cdot\Phi_-\cap\Phi_-\setminus\{-\alpha\})\subset \Phi_-$.
In other words, all negative roots
in $w\cdot\Phi_-$ that are not $-\alpha$ remain negative under the
action of $s_\alpha$. Since $s_\alpha w>w$, 
$s_\alpha w\cdot \Phi_-$ contains more positive roots
than $w\cdot \Phi_-$ 
, it must be the case that
$\alpha\in (s_\alpha w)\cdot \Phi_-$, and thus
$U_\alpha \subset s_\alpha w\cdot U_-$. 
Assume by induction $m_Q(\mathbf{z})w^{-1}\in w\cdot U_-\cap U$,
we  show that $\exp(zR_\alpha)s_\alpha m_Q(\mathbf{z})w^{-1} s_\alpha \in (s_\alpha w)\cdot U_-\cap U$.  Now
\vskip 0.5em
\begin{tabular}{rl}
& $\lim_{t\to\infty}(s_\alpha w\cdot\widecheck{\rho}(t))\cdot (\exp(zR_\alpha)s_\alpha m_Q(\mathbf{z})w^{-1} s_\alpha)$\\
$=$&$\lim_{t\to\infty}(s_\alpha w\cdot\widecheck{\rho}(t))\cdot (\exp(zR_\alpha))\lim_{t\to\infty}(s_\alpha w\cdot\widecheck{\rho}(t))\cdot (s_\alpha m_Q(\mathbf{z})w^{-1} s_\alpha)$.
\end{tabular} 
\vskip 0.5em
\noindent The first factor
is 1 by $U_\alpha\subset s_\alpha w\cdot U_-$ and the
second factor is 1 by $\lim_{t\to\infty}(w\cdot\widecheck{\rho}(t))\cdot m_Q(\mathbf{z})w^{-1}=1$
(the induction hypothesis). This establishes 
$\exp(zR_\alpha)s_\alpha m_Q(\mathbf{z})w^{-1} s_\alpha \in (s_\alpha w)\cdot U_-$.

We are left to show $\exp(zR_\alpha)s_\alpha m_Q(\mathbf{z})w^{-1} s_\alpha \in U$. 
Since $m_Q(\mathbf{z})w^{-1}\in w\cdot U_-\cap U$,
$s_\alpha m_Q(\mathbf{z})w^{-1}s_\alpha\in  s_\alpha w\cdot U_-\cap s_\alpha\cdot U$. Since $\alpha\in s_\alpha w\cdot \Phi_-$ but $s_\alpha\cdot\Phi_+ = \Phi_+\setminus \{\alpha\}\cup\{-\alpha\}$,
$s_\alpha w\cdot \Phi_-\cap s_\alpha \cdot \Phi_+\subset \Phi_+\setminus\{\alpha\}$. It follows
that $s_\alpha m_Q(\mathbf{z})w^{-1}s_\alpha\in U$ and $\exp(zR_\alpha)s_\alpha m_Q(\mathbf{z})w^{-1} s_\alpha \in U$.
\end{proof}
Let $w_1,w_2\in W^P$ such that $w_2w_1=w_0w_{0,P}$
and $l(w_2)+l(w_1)=l(w_0w_{0,P})$. Choose
$Q_1$ a reduced word for $w_1$ and $Q_2$ a reduced word for $w_2$. Let $m_1, m_2$ denote the Bott-Samelson
maps $m_{Q_1}$, $m_{Q_2}$, respectively. 
\begin{lemma}
$w_2^{-1}\Im(m_{Q_2})=w_1\cdot U_-^P\cap U_-$.
\end{lemma}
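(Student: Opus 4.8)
The plan is to reduce the statement to an identity between two sets of roots. Applying Lemma 2 to $w_2\in W^P$ gives $\Im(m_{Q_2})w_2^{-1}=w_2\cdot U^P_-\cap U$; conjugating this equality by $w_2^{-1}$, and using that conjugation commutes with intersection, yields
\[
w_2^{-1}\Im(m_{Q_2})=w_2^{-1}\cdot\left(w_2\cdot U^P_-\cap U\right)=U^P_-\cap w_2^{-1}\cdot U .
\]
Both $U^P_-\cap w_2^{-1}\cdot U$ and $w_1\cdot U^P_-\cap U_-$ are $T$-stable connected unipotent subgroups of $G$, hence each is the product of the root subgroups indexed by its set of $T$-weights, and two such subgroups coincide exactly when their weight sets do. Since $U^P_-$ has $T$-weights $\Phi_-\setminus\Phi_{-J}$, the lemma is therefore equivalent to the equality of root sets
\[
A:=(\Phi_-\setminus\Phi_{-J})\cap w_2^{-1}\cdot\Phi_+\ =\ w_1\cdot(\Phi_-\setminus\Phi_{-J})\cap\Phi_-=:B .
\]

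To prove $A\subseteq B$, take $\beta\in A$ and set $\gamma:=w_1^{-1}\cdot\beta$. Since $w_2w_1=w_0w_{0,P}$, we have $w_2=w_0w_{0,P}w_1^{-1}$, so $w_2\cdot\beta\in\Phi_+$ forces $w_0w_{0,P}\cdot\gamma\in\Phi_+$, that is, $w_{0,P}\cdot\gamma\in\Phi_-$. Using the identity $w_{0,P}\cdot\Phi_-=(\Phi_-\setminus\Phi_{-J})\sqcup\Phi_J$ from the proof of Lemma 2 together with $w_{0,P}^2=1$, the condition $w_{0,P}\cdot\gamma\in\Phi_-$ is equivalent to $\gamma\in(\Phi_-\setminus\Phi_{-J})\sqcup\Phi_J$. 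But $\gamma\in\Phi_J$ is impossible, since it would give $\beta=w_1\cdot\gamma\in w_1\cdot\Phi_J\subset\Phi_+$ by Lemma 1, contradicting $\beta\in\Phi_-$. Hence $\gamma\in\Phi_-\setminus\Phi_{-J}$, so $\beta=w_1\cdot\gamma\in w_1\cdot(\Phi_-\setminus\Phi_{-J})\cap\Phi_-=B$.

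As $A$ and $B$ are finite, it remains to match cardinalities. The set of roots of $\Phi_-$ sent into $\Phi_+$ by $w_2$ has size $l(w_2)$, and by Lemma 1 (for $w_2\in W^P$) none of them lies in $\Phi_{-J}$, so $|A|=l(w_2)$. For $B$, Lemma 1 gives $w_1\cdot\Phi_{-J}\subset\Phi_-$, while $|w_1\cdot\Phi_-\cap\Phi_-|=|\Phi_+|-l(w_1)$, so $|B|=|\Phi_+|-|\Phi_J|-l(w_1)$; and $l(w_2)=l(w_0w_{0,P})-l(w_1)=(|\Phi_+|-|\Phi_J|)-l(w_1)$ as well, since $l(w_0w_{0,P})=l(w_0)-l(w_{0,P})$. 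Therefore $A=B$, and the lemma follows.

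In this argument the dictionary between $T$-stable unipotent subgroups and closed subsets of roots, and the length computation, are routine. The load-bearing step is the inclusion $A\subseteq B$ --- concretely, the fact that $w_1^{-1}\cdot\beta$ cannot lie in $\Phi_J$ --- which is exactly where both $w_1\in W^P$ (via Lemma 1) and the behaviour of $w_{0,P}$ on the Levi root subsystem enter. That is the point I would expect to require the most care.
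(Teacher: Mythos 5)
Your proof is correct, and it takes a genuinely different route from the paper's. The paper's own proof reduces the claim to the previous lemma applied to the element $w_0w_2^{-1}w_0$, which it checks lies in $W^P$; the key technical device there is the Bott--Samelson identity $w_0\,m_{Q_2'}(\mathbf{z})\,w_0 = m_{Q_2}(\mathbf{z})^{-1}$ for a suitable reduced word $Q_2'$, together with conjugation of the whole statement by $w_0$ and the observation $w_0w_1\cdot U_-^P = w_0w_1w_{0,P}\cdot U_-^P$. You instead apply Lemma~2 to $w_2\in W^P$ directly, rewrite $w_2^{-1}\Im(m_{Q_2}) = U^P_-\cap w_2^{-1}\cdot U$ by conjugating, and then dispose of the remaining group-theoretic identity $U^P_-\cap w_2^{-1}\cdot U = w_1\cdot U^P_-\cap U_-$ entirely at the level of $T$-weights: an inclusion of the two closed root sets via the decomposition $w_{0,P}\cdot\Phi_- = (\Phi_-\setminus\Phi_{-J})\sqcup\Phi_J$ and Lemma~1, followed by a length count to upgrade it to equality. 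What your argument buys is that it avoids introducing the auxiliary element $w_0w_2^{-1}w_0$, the verification that it belongs to $W^P$, and the inverse-of-Bott--Samelson identity (which the paper asserts without proof); the cost is that you invoke the correspondence between $T$-stable connected unipotent subgroups and closed subsets of roots, which the paper sidesteps. Both proofs ultimately rest on Lemma~2 and a dimension/cardinality comparison, so the logical load is comparable; yours is arguably the more self-contained of the two.
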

\begin{proof}
The dimensions of both sides are $l(w_2)=l(w_0)-l(w_{0,P})-l(w_1)$, so it 
suffices to show that for any $\mathbf{z}$,
$w_2^{-1}m_{Q_2}(\mathbf{z})\in w_1\cdot U_-^P\cap U_-$, or $m_{Q_2}(\mathbf{z})^{-1}w_2\in w_1\cdot U_-^P\cap U_-$.
There exists a reduced word $Q_2'$ of 
$w_0w_2^{-1}w_0$ such that $w_0m_{Q_2'}(\mathbf{z})w_0=m_{Q_2}(\mathbf{z})^{-1}$. Our statement reduces to
$w_0m_{Q_2'}(\mathbf{z})w_0w_2\in w_1\cdot U_{-}^P\cap U_-$. Conjugating by $w_0$, we
get $m_{Q_2'}(\mathbf{z})w_0w_2w_0\in w_0w_1\cdot U^P_-\cap U$. Notice that
$w_0w_2^{-1}w_0=w_0w_1w_{0,P}\in W^P$, since
$l(w_0w_2^{-1}w_0)=l(w_2)=l(w_0)-l(w_{0,P})-l(w_1)$ and $l(w_0w_1)=l(w_0)-l(w_1)$. The
claim then follows from the previous lemma,
since $w_0w_1\cdot U_-^P=w_0w_1w_{0,P}\cdot U_-^P.$
\end{proof}

\begin{lemma}
$t^\lambda \cdot U^P\subset \mathcal{U}^{G(\mathcal{O}_-)}$, and
$t^\lambda \cdot U_-^P\subset \mathcal{U}^{G(\mathcal{O})}$.
\end{lemma}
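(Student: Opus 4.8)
**Proof proposal for Lemma 5 ($t^\lambda \cdot U^P \subset \mathcal{U}^{G(\mathcal{O}_-)}$ and $t^\lambda \cdot U_-^P \subset \mathcal{U}^{G(\mathcal{O})}$).**

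The plan is to reduce everything to the action of $t^\lambda$ on individual root spaces. The group $U^P$ is the product of root subgroups $U_\beta$ over roots $\beta \in \Phi_+ \setminus \Phi_J$, i.e. positive roots $\beta = \sum_{\alpha \in S} c_\alpha \alpha$ with $c_\alpha > 0$ for at least one $\alpha \in S \setminus J$. Conjugation by $t^\lambda$ sends the finite-type root space $\g_\beta$ (sitting inside $\widehat{\g}$ as the root space for $\beta + 0\cdot\delta$) to the root space for $t^\lambda \cdot \beta = \beta + \langle \lambda, \beta \rangle \delta$, by the formula for the $\widehat{W}$-action on $\Phi_\text{aff}$ recalled in the excerpt. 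So it suffices to check the sign of $\langle \lambda, \beta \rangle$: I want $\langle \lambda, \beta \rangle < 0$ for all such $\beta$ (so that the image root $\beta + m\delta$ has $m < 0$, landing in $\mathcal{U}^{G(\mathcal{O}_-)}$), and symmetrically $\langle \lambda, \beta \rangle > 0$ for $\beta$ negative-with-a-coefficient-outside-$J$, which gives $\mathcal{U}^{G(\mathcal{O})}$.

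First I would write $\langle \lambda, \beta \rangle = \sum_{\alpha \in S} c_\alpha \langle \lambda, \alpha \rangle$. By the hypothesis on $\lambda$, $\langle \lambda, \alpha \rangle = 0$ for $\alpha \in J$ and $\langle \lambda, \alpha \rangle > 0$ for $\alpha \in S \setminus J$ (here I use that $\lambda$ is dominant, so the pairings are $\geq 0$, combined with the iff condition which forces strict positivity exactly off $J$). Hence for a positive root $\beta \in \Phi_+ \setminus \Phi_J$, all $c_\alpha \geq 0$ and at least one $c_\alpha > 0$ with $\alpha \in S \setminus J$, giving $\langle \lambda, \beta \rangle > 0$; wait — I need to track the sign convention carefully, since $\mathcal{U}^{G(\mathcal{O})}$ corresponds to $m < 0$ and conjugation by $t^\lambda$ on $U_\beta$ (with $\beta \in \Phi_+$) produces $\beta + \langle\lambda,\beta\rangle\delta$ with $\langle\lambda,\beta\rangle > 0$. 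So actually $t^\lambda \cdot U^P$ lands in the $m > 0$ part, which is $\mathcal{U}^{G(\mathcal{O}_-)}$ — consistent with the statement. For $U_-^P$, the relevant roots are negative roots $\beta$ with a nonzero coefficient on a simple root outside $J$; then $\langle \lambda, \beta \rangle < 0$, so $t^\lambda \cdot U_\beta$ has affine root $\beta + m\delta$ with $m < 0$, landing in $\mathcal{U}^{G(\mathcal{O})}$. Both inclusions follow by taking products over the relevant root subgroups, using that $t^\lambda$ normalizes $T$ and hence acts by conjugation compatibly with the root-subgroup decomposition.

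The main subtlety — not a deep obstacle, but the thing to get right — is bookkeeping the identification of the finite root space $\g_\beta$ with the affine root space for $\beta + 0\delta$ versus other lifts $\beta + m\delta$, and making sure the $\mathbb{C}^*$ loop-rotation grading matches the stated correspondence ($m < 0 \leftrightarrow \mathcal{U}^{G(\mathcal{O})}$, $m > 0 \leftrightarrow \mathcal{U}^{G(\mathcal{O}_-)}$). Concretely, I would note $U^P$ sits inside $G \subset G(\mathcal{K})$ as constant loops, its Lie algebra is spanned by the degree-$0$ elements $R_\beta$, and $t^\lambda R_\beta t^{-\lambda}$ has loop-degree $\langle \lambda, \beta \rangle$ by definition of $t^\lambda \in X_*(T)$ acting on $G(\mathcal{K})$. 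Once the sign computation above is in place, both containments are immediate, and the argument is symmetric under $J$-complementation and $\beta \mapsto -\beta$.
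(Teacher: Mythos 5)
Your argument is correct and is essentially the paper's proof: decompose $U^P$ (resp.\ $U_-^P$) into root subgroups over $\Phi_+\setminus\Phi_J$ (resp.\ $\Phi_-\setminus\Phi_{-J}$), note $t^\lambda\cdot\beta = \beta + \langle\lambda,\beta\rangle\delta$, and use that the hypothesis on $\lambda$ forces $\langle\lambda,\beta\rangle>0$ on the first set and $<0$ on the second, placing the conjugated root subgroups in $\mathcal{U}^{G(\mathcal{O}_-)}$ and $\mathcal{U}^{G(\mathcal{O})}$ respectively. Your mid-proof sign reversal (``I want $\langle\lambda,\beta\rangle<0$ \dots wait'') should be cleaned up before this is written for real, but you catch and correct it, arriving at the correct conclusion with the correct conventions, and your spelled-out justification that $\langle\lambda,\alpha\rangle>0$ for $\alpha\in S\setminus J$ (dominance plus the ``iff'' condition) is a detail the paper leaves implicit.
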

\begin{proof}
The roots that correspond to $U^P$ are
$\Phi_+\setminus \Phi_J$. 
For any $\beta$ in this set,
$\langle\lambda,\beta\rangle >0$.
Then
$t^\lambda\cdot \beta = \beta+\langle\lambda,\beta\rangle \delta$, which is in the roots of
$\mathcal{U}^{G(\mathcal{O}_-)}$.
The proof for the second part is similar.
\end{proof}
We will now give  parametrizations of both sides
of the isomophisms separately. 
Let $(z_1,\cdots, z_{|Q_1|},z_{|Q_1|+1},z_{|Q_1|+|Q_2|})\in\mathbb{A}^{l(w_0w_{0,P})}$ be parameters. For ease of
notation, let $\mathbf{z}_1 =(z_1,\cdots,z_{|Q_1|})$, $\mathbf{z}_2 =(z_{|Q_1|+1,\cdots,|Q_1|+|Q_2|})$, and
$\mathbf{z} =(z_1,\cdots, z_{|Q_1|},z_{|Q_1|+1},z_{|Q_1|+|Q_2|})$.
To parametrize 
$\mathcal{X}_\circ^{w_1t^\lambda {w_1}^{-1}}\cap \mathcal{X}_{t^\lambda w_{0,P}w_0}=\mathcal{X}_\circ^{w_1t^\lambda w_{0,P}w_0w_2}\cap \mathcal{X}_{t^\lambda w_{0,P}w_0}$, use the map
\[\psi_{w_1}:\mathbf{z}\mapsto m_1(\mathbf{z}_1)t^\lambda w_{0,P}w_0m_2(\mathbf{z}_2)\mathcal{I}/\mathcal{I}.\]
We first see the image of this map  lies
inside $\mathcal{X}^{w_1t^\lambda w_{0,P}w_0w_2}_\circ$.
Since $t^\lambda w_{0,P}w_0$ is the smallest
element in the set $Wt^\lambda W$ \cite{HL}, $l(w_1t^\lambda w_{0,P}w_0w_2)=l(w_1)+l(t^\lambda w_{0,P}w_0)+l(w_2)$. Choose a reduced word
$Q'$ for $t^\lambda w_{0,P}w_0$, then
the Bott-Samelson map $m_{Q_1Q' Q_2}\circ\pi$
 where $\pi$ is the projection
$G(\mathcal{K})\to G(\mathcal{K})/\mathcal{I}$
parametrizes $\mathcal{X}^{w_1t^\lambda w_{0,P}w_0w_2}_\circ$. Setting the coordinates
that correspond to $Q'$ to 0 we get $\psi_{w_1}$.
The fact that that the image of 
$\psi_{w_1}$ lies in $\mathcal{X}_{t^\lambda w_{0,P}w_0}$ will follow from
the proof of Proposition 1 below.

Now we describe the parametrization of $w_1U_-^PP/P\subset G/P$. First we consider the parametrization of $w_1\cdot U_-^P$ 
\[p: \mathbb{A}^{l(w_{0,P}w_0)}\to w_1\cdot U_-^P,\]
\[ p(\mathbf{z})=m_1(\mathbf{z}_1)w_{0,P}w_0m_2(\mathbf{z}_2).\]
The fact that this is indeed
a parametrization follows from
Lemma 2 and 3.
Since $p(\mathbf{z})\in w_1U_-^Pw_1^{-1}$, there exists a unique
$u_-(\mathbf{z})\in U_-\cap w_1U_-^Pw_1^{-1}$ such that 
$u_-(\mathbf{z})p(\mathbf{z})\in U\cap w_1U_-^Pw_1^{-1}$
by Lemma 2.2 in \cite{KWY}.
The parametrization of $wU_-^PP/P\subset G/P$
is given by
\[\phi_{w_1}:\mathbf{z}\mapsto u_-(\mathbf{z})m_1(\mathbf{z}_1)P/P.\]
\begin{prop}
The map $\psi_{w_1}\circ \phi_{w_1}^{-1}$
is a stratified isomorphism on the $w_1$-charts.
\end{prop}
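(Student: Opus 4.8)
The plan is to show that $\psi_{w_1}\circ\phi_{w_1}^{-1}$ is a well-defined isomorphism by exhibiting a third parametrization that both maps factor through. The key observation is that both $\phi_{w_1}$ and $\psi_{w_1}$ are built from the same data: the Bott-Samelson map $m_1(\mathbf z_1)$ together with a ``tail'' coming from $w_{0,P}w_0m_2(\mathbf z_2)$ (on the $G/P$ side) or $t^\lambda w_{0,P}w_0m_2(\mathbf z_2)$ (on the affine side). I would first establish that $\phi_{w_1}$ is injective with image exactly $w_1U_-^PP/P$: injectivity of $\mathbf z\mapsto p(\mathbf z)$ is Lemmas 2 and 3, the passage $p(\mathbf z)\mapsto u_-(\mathbf z)m_1(\mathbf z_1)$ is a well-defined bijection onto $w_1U_-^PP/P$ because $u_-(\mathbf z)p(\mathbf z)\in U\cap w_1U_-^Pw_1^{-1}\subset U^P$ fixes the coset, so $u_-(\mathbf z)m_1(\mathbf z_1)P/P = u_-(\mathbf z)p(\mathbf z)m_1(\mathbf z_1)P/P = p(\mathbf z)m_1(\mathbf z_1)P/P$ — wait, one must be careful that $m_1(\mathbf z_1)\in w_1U^P_-w_1^{-1}$-related translation actually lands things correctly; the honest statement is that $w_1U_-^PP/P$ is an affine cell of dimension $l(w_0 w_{0,P})$ and $\phi_{w_1}$ is a degree-one polynomial parametrization of it.

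Next I would analyze the affine side. The heart of the matter is Lemma 4: conjugating by $t^\lambda$ turns $w_1\cdot U^P$ into a subgroup of $\mathcal U^{G(\mathcal O_-)}$ and $w_1\cdot U_-^P$ into a subgroup of $\mathcal U^{G(\mathcal O)}$. I want to relate $\psi_{w_1}(\mathbf z) = m_1(\mathbf z_1)t^\lambda w_{0,P}w_0 m_2(\mathbf z_2)\mathcal I/\mathcal I$ to $\phi_{w_1}(\mathbf z)$. Writing $m_1(\mathbf z_1)t^\lambda w_{0,P}w_0m_2(\mathbf z_2) = m_1(\mathbf z_1)\,t^\lambda\,p'(\mathbf z)$ where $p'(\mathbf z) := w_{0,P}w_0m_2(\mathbf z_2) = w_1^{-1}p(\mathbf z)\in U_-^P$, and using $t^\lambda\cdot U_-^P\subset\mathcal U^{G(\mathcal O)}\subset\mathcal I$, I can try to absorb $t^\lambda p'(\mathbf z)t^{-\lambda}$ into $\mathcal I$ — but $t^\lambda$ itself is not in $\mathcal I$, so the right move is: $m_1(\mathbf z_1)t^\lambda w_{0,P}w_0m_2(\mathbf z_2)\mathcal I = m_1(\mathbf z_1)t^\lambda w_{0,P}w_0 \mathcal I = m_1(\mathbf z_1)\,(t^\lambda w_{0,P}w_0\mathcal I)$ precisely when $w_{0,P}w_0 m_2(\mathbf z_2)$ differs from $w_{0,P}w_0$ by right multiplication by something in $\mathcal I$; equivalently $(w_{0,P}w_0)^{-1}m_2(\mathbf z_2)^{-1}(w_{0,P}w_0)\cdot$... this is where I expect the combinatorics of the decomposition $w_2w_1 = w_0w_{0,P}$ and Lemma 3 to enter — Lemma 3 tells us $w_2^{-1}\Im(m_{Q_2}) = w_1\cdot U_-^P\cap U_-$, so $w_{0,P}w_0m_2(\mathbf z_2) = w_1^{-1}\cdot(\text{element of }w_1\cdot U_-^P\cap U_-)$, and then Lemma 4 places $t^\lambda$-conjugates of these in $\mathcal I$. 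Then the comparison becomes: modulo $\mathcal I$, $\psi_{w_1}(\mathbf z)$ depends only on $m_1(\mathbf z_1)$ and the ``$U_-$-dressing'' needed to push $m_1(\mathbf z_1)t^\lambda w_{0,P}w_0 m_2(\mathbf z_2)$ into the cell $\mathcal X^{w_1t^\lambda w_{0,P}w_0w_2}_\circ$.

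The strategy for identifying the composite with the identity-type map is then: show that applying the ``$u_-$-dressing'' $u_-(\mathbf z)$ (the same unipotent element from the $G/P$ side) to $\psi_{w_1}(\mathbf z)$ on the left produces a representative in $\mathcal I^- \cdot (\text{fixed element})\cdot\mathcal I$ that manifestly lies in the opposite Schubert cell, so that $c_{w_1} := \psi_{w_1}\circ\phi_{w_1}^{-1}$ sends $u_-(\mathbf z)m_1(\mathbf z_1)P/P$ to $u_-(\mathbf z)m_1(\mathbf z_1)t^\lambda w_{0,P}w_0\mathcal I/\mathcal I$ — i.e. the map is literally ``act by the same unipotent, then right-translate the flag by $t^\lambda w_{0,P}w_0$ and reinterpret in the affine flag variety.'' Concretely I would (i) check that $u_-(\mathbf z)\in U_-\cap w_1U_-^Pw_1^{-1}$, viewed in $\widehat G$, conjugates $\mathcal X^{w_1 t^\lambda w_{0,P}w_0 w_2}_\circ$-representatives correctly because $t^\lambda$ conjugates $w_1 U^P_- w_1^{-1}$ into $\mathcal U^{G(\mathcal O)}\subset\mathcal I$ by Lemma 4 (after moving past $w_1$), hence commuting $u_-$ past $t^\lambda w_{0,P}w_0$ only changes the $\mathcal I$-coset; and (ii) verify the image genuinely lands in $\mathcal X^{w_1t^\lambda w_{0,P}w_0w_2}_\circ \cap \mathcal X_{t^\lambda w_{0,P}w_0}$ — the first containment from the Bott-Samelson description already sketched before the Proposition, the second (membership in the Schubert variety $\mathcal X_{t^\lambda w_{0,P}w_0}$) from exhibiting the representative as lying in $\mathcal I_- \cdot t^\lambda w_{0,P}w_0 \cdot \mathcal I / \mathcal I$, using that $u_-(\mathbf z)m_1(\mathbf z_1)$, suitably rewritten via Lemmas 2–3, is an $\mathcal I_-$-translate. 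Finally, since both sides are smooth affine varieties of the same dimension $l(w_0 w_{0,P})$ and the map is a morphism that is bijective on $\mathbb C$-points with a regular inverse built the same way (reading the construction backwards: a point of the affine cell determines $m_1(\mathbf z_1)$ and the $\mathcal I$-coset, hence $\mathbf z$), it is an isomorphism; stratification-preservation is then checked stratum by stratum by matching $\Pi^w_v$ with $\mathcal X^{w_1t^\lambda w_{0,P}w_0w_2}_\circ\cap\mathcal X_{vt^\lambda w^{-1}}$, which reduces to the combinatorial poset isomorphism of He–Lam \cite{HL} together with the observation that the chart map intertwines the two Schubert stratifications. The main obstacle I anticipate is step (i)–(ii): carefully tracking how the left $\mathcal I_-$-action and the conjugation by $t^\lambda$ interact so that the ``dressing'' $u_-(\mathbf z)$ computed in $G$ continues to do the right job in $\widehat G$ — i.e. proving that passing from $G/P$ to the affine flag variety really is given by the clean formula $u_-(\mathbf z)m_1(\mathbf z_1)P/P\mapsto u_-(\mathbf z)m_1(\mathbf z_1)t^\lambda w_{0,P}w_0\mathcal I/\mathcal I$ and that this is well-defined independently of the Bott-Samelson choices $Q_1,Q_2$.
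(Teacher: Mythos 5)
Your proposal inverts the difficulty of the proposition: you devote most of the work to showing the map is a well-defined bijection, and then claim stratification-preservation "reduces to the combinatorial poset isomorphism of He--Lam together with the observation that the chart map intertwines the two Schubert stratifications." But that last "observation" \emph{is} the proposition. The paper treats the bijectivity side as routine (both sides are parametrized by Bott--Samelson coordinates built from $Q_1,Q_2$, and the identification is essentially tautological on coordinates), and concentrates on the stratum-matching. This is the genuine content, and it is missing from your argument.

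Concretely, what you would need to prove is: if $u_-(\mathbf{z})m_1(\mathbf{z}_1)P/P\in\mathring\Pi^w_v$, then $m_1(\mathbf{z}_1)t^\lambda w_{0,P}w_0 m_2(\mathbf{z}_2)\mathcal{I}/\mathcal{I}\in\mathcal{X}_{vt^\lambda w^{-1}}$. The paper's approach is to pick $p\in L\cap U_-$ with $u_-(\mathbf{z})m_1(\mathbf{z}_1)p\in BwB\cap B_-vB$, use the identity $u_-(\mathbf{z})m_1(\mathbf{z}_1)=u_+(\mathbf{z})m_2(\mathbf{z}_2)^{-1}w_0w_{0,P}$ (where $u_+=u_-\cdot p(\mathbf{z})$) to split the Richardson membership into $m_1(\mathbf{z}_1)p\in B_-vB$ and $p^{-1}w_{0,P}w_0m_2(\mathbf{z}_2)\in Bw^{-1}B$, then insert $t^\lambda$ between the two factors using that $p\in L$ commutes with $t^\lambda$, reducing to the key containment
\[
B_-vBt^{\lambda}Bw^{-1}B\subset \mathcal{I}_-vt^{\lambda}w^{-1}\mathcal{I}.
\]
This containment is where Lemmas 1 and 4 actually get used: one writes each $B$-factor in its Levi decomposition $b=lu$ with $l\in L\cap B$, $u\in U^P$, commutes $l$ past $t^\lambda$ directly, and commutes $u$ past $t^\lambda$ at the cost of landing in $\mathcal{U}^{G(\mathcal{O}_-)}\subset\mathcal{I}_-$ or $\mathcal{U}^{G(\mathcal{O})}\subset\mathcal{I}$ (Lemma 4); Lemma 1 ensures $w(L\cap B)w^{-1}\subset B$ so that the leftover Levi piece stays in $\mathcal{I}$. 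None of this appears in your sketch.

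A secondary issue: the "clean formula" you posit, namely $u_-(\mathbf z)m_1(\mathbf z_1)P/P\mapsto u_-(\mathbf z)m_1(\mathbf z_1)t^\lambda w_{0,P}w_0\mathcal I/\mathcal I$, is not obviously the same coset as $m_1(\mathbf z_1)t^\lambda w_{0,P}w_0 m_2(\mathbf z_2)\mathcal I/\mathcal I$, and you do not verify it; the paper never asserts such a formula and doesn't need it. If you pursue your route you must actually prove that identity of $\mathcal{I}$-cosets, which would itself require commuting $u_-(\mathbf z)$ past $t^\lambda$ and absorbing $m_2(\mathbf z_2)$ into $\mathcal I$ via Lemma 4 — at which point you are doing essentially the computation the paper does, but without the Levi factorization trick that makes it go through cleanly.
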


\begin{proof}
Assume $m_1(\mathbf{z}_1)=u_-(\mathbf{z})m_1(\mathbf{z}_1)P/P 
\in \mathring{\Pi}_v^w$ where $(w,v)\in W^J\times W$, $v\le w$.
This means there exists $p\in P$ such that $u_-(\mathbf{z})m_1(\mathbf{z}_1)p\in BwB\cap B_-vB$.
We may take $p\in L\cap U_-$.  
Let $u_+(\mathbf{z})=u_-(\mathbf{z})p(\mathbf{z})$, then  $u_-(\mathbf{z})m_1(\mathbf{z_1})=u_+(\mathbf{z})m_2(\mathbf{z}_2)^{-1}w_0w_{0,P}$.
Since $u_-(\mathbf{z})\in U_-$ and $u_+(\mathbf{z})\in U$, we have $m_1(\mathbf{z_1})p\in B_-vB$
and $p^{-1}w_{0,P}w_0m_2(\mathbf{z}_2)\in Bw^{-1}B$.

We  want to show that $m_1(\mathbf{z}_1)t^\lambda w_{0,P}w_0 m_2(\mathbf{z}_2)\mathcal{I}/\mathcal{I}\in\mathcal{X}_{vt^\lambda w^{-1}}$.
Since $p\in L\cap U_-$, $p$  commutes with $t^\lambda$, so 
$m_1(\mathbf{z}_1)t^\lambda w_{0,P}w_0 m_2(\mathbf{z}_2)=m_1(\mathbf{z}_1)pt^\lambda p^{-1} w_{0,P}w_0 m_2(\mathbf{z}_2)\in B_-vBt^{\lambda}Bw^{-1}B$.
We now argue that $ B_-vBt^{\lambda}Bw^{-1}B\subset \mathcal{I}_-vt^{\lambda}w^{-1}\mathcal{I}$.
It suffices to show that for all $b_1,b_2\in B$, $vb_1t^\lambda b_2 w^{-1}\in \mathcal{I}_-vt^\lambda w^{-1}\mathcal{I}$.

Write $b_1=l_1u_1$, where $l_1\in L\cap B$ and $u_1\in U^P$.  By Lemma 4, $u_1t^\lambda = t^\lambda u_1'$, where $u_1'\in \mathcal{U}^{G(\mathcal{O})}$. 
Since $b_2w^{-1}\in G$ normalizes $\mathcal{U}^{G(\mathcal{O})}$,  there exists
$u_1''\in \mathcal{U}^{G(\mathcal{O})}\subset\mathcal{I}$ such that $u_1'b_2w^{-1}= b_2w^{-1}u_1''$.

So far we have shown that $vb_1t^\lambda b_2 w^{-1} =vl_1t^\lambda b_2w^{-1}u_1''$. 
Since $l_1\in L\cap B$, $l_1$ commutes with $t^\lambda$. Therefore, $vl_1t^\lambda b_2w^{-1}u_1''= vt^\lambda b_2'w^{-1}u_1''$ for some $b_2'\in B$.
Now write $b_2'=u_2l_2$ where  $u_2\in U^P$ and  $l_2\in L\cap B$.   
Again by Lemma 4, we have $t^\lambda u_2 =u_2't^\lambda$ where $u_2'\in \mathcal{U}^{G(\mathcal{O}_-)}$. Since $v\in G$ normalizes
$\mathcal{U}^{G(\mathcal{O}_-)}$, we have $vu_2'=u_2''v$ for some $u_2''\in \mathcal{U}^{G(\mathcal{O}_-)}\subset\mathcal{I}_-.$ 
Furthermore, by Lemma 1, since $w\in W^J$, $w(L\cap B)w^{-1}\subset B$. 
Therefore, there exists some $b\in B$ such that $l_2 w^{-1}=w^{-1}b$. 
Summing up, we have $vt^\lambda u_2l_2 w^{-1}u_1''=u_2''vt^\lambda w^{-1}bu_1''\in \mathcal{I}_-vt^\lambda w^{-1}\mathcal{I}$ as desired.  
\end{proof}
\section{An Example in Type A}

    Let $G=SL_4(\C)$, $S=\{\alpha_1, \alpha_2, \alpha_3\}$, $J=\{\alpha_1,\alpha_3\}$, and $\lambda=(1,0,0,-1)$. Let $s_1, s_2,s_3$ be the simple
    reflections associated to the simple roots
    $\alpha_1,\alpha_2,\alpha_3$. In this case $w_0w_{0,P}=4231$ (in
    one-line notation).  The factors for
     Bott-Samelson maps are of the form
    \[\exp(zR_{\alpha_1})=\left[\begin{smallmatrix}z&1& & \\-1& & & \\ & & 1 & \\ & & & 1\end{smallmatrix}\right],\ 
    \exp(zR_{\alpha_1})=\left[\begin{smallmatrix}1& & & \\ &z&1& \\ & -1&  & \\ & & & 1\end{smallmatrix}\right],\ 
    \exp(zR_{\alpha_1})=\left[\begin{smallmatrix}1&& & \\& 1& & \\ & & z &1 \\ & &-1 & \end{smallmatrix}\right].\]
    We show an example of computations on the chart 
    $w_1U_-^PP/P$ where $w_1=4123=s_3s_2s_1$. 
    Then $w_2=w_0w_{0,P}w_1^{-1}=2314=s_1s_2$.
    We demonstrate that the chart isomorphism is stratification
    preserving by showing explicitly that the equations
    of the projected Richardson divisors that meet
    $w_1U_-^PP/P$  match the corresponding equations of the
    Schubert divisors on the Kazhdan-Lusztig variety $\mathcal{X}^{w_1t^{\lambda}w_1^{-1}}_\circ \cap \mathcal{X}_{t^\lambda w_0w_{0,P}}\subset \widehat{SL_4}(\C)/\mathcal{I}$. This is enough, because the stratifications 
    on both sides are
    uniquely determined by divisors.
    
    The chart $w_1U_-^P/P$ is parametrized as follows
    \[\phi_{w_1}:(z_1,z_2,z_3,z_4,z_5)\mapsto
 \begin{bmatrix}z_3&1& & \\z_2& & 1 & \\ z_1+z_3z_4+z_2z_5&z_4 & z_5 &1 \\ -1 & & & \end{bmatrix}P/P.\]
    The codimension one projected Richardson divisors
    on this $G/P$ are $\Pi_{2134}^{4231}$, $\Pi_{1324}^{4231}$, $\Pi_{1243}^{4231}$, $\Pi_{1234}^{4132}$,  and $\Pi_{1234}^{3241}$.
    Among these, the first four have nonempty intersections with
    our chart under consideration. Their equations on this chart
    are $z_3$, $z_3z_4+z_2z_5$, $z_1$,  $z_4$, respectively.
    
    Meanwhile, the Kazhdan-Lusztig variety 
    $\mathcal{X}^{w_1t^{\lambda}w_1^{-1}}_\circ \cap \mathcal{X}_{t^\lambda w_0w_{0,P}}$ is parametrized as follows:
    
    \begin{tabular}{rl}
    $\psi_{w_1}:(z_1,z_2,z_3,z_4,z_5)\mapsto$ & $ 
    \begin{bmatrix}
    z_3&1& & \\
    z_2& &1 & \\
    z_1& &  & 1\\
    -1& & & \end{bmatrix}
    \begin{bmatrix}& & &t \\ & -1& & \\ & & -1 & \\1/t & & & \end{bmatrix}
    \begin{bmatrix}z_4&z_5& 1& \\-1& & & \\ &-1 &  & \\ & & & 1\end{bmatrix}\mathcal{I}/\mathcal{I}$\\
    & \\
    $=$& $
    \begin{bmatrix}
    1& & &tz_3 \\ &1 & &tz_2 \\ -z_4t^{-1}&-z_5t^{-1} &-t^{-1} &tz_1 \\ & & & -t\end{bmatrix}\mathcal{I}/\mathcal{I}$\\
    \end{tabular}
    
    Following the rules given in \cite{H}, we compute
    that the equations 
    for $\mathcal{X}^{w_1t^{\lambda}w_1^{-1}}_\circ \cap \mathcal{X}_{s_1t^\lambda w_0w_{0,P}}$, $\mathcal{X}^{w_1t^{\lambda}w_1^{-1}}_\circ \cap \mathcal{X}_{s_2t^\lambda w_0w_{0,P}}$,
    $\mathcal{X}^{w_1t^{\lambda}w_1^{-1}}_\circ \cap \mathcal{X}_{s_3t^\lambda w_0w_{0,P}}$, and
    $\mathcal{X}^{w_1t^{\lambda}w_1^{-1}}_\circ \cap \mathcal{X}_{t^\lambda w_0w_{0,P}s_1}$ are also
    $z_3$, $z_3z_4+z_2z_5$, $z_1$,  $z_4$, respectively.
    
    \bibliographystyle{alpha}
  \bibliography{ref}
    
    \end{document}